\documentclass[10pt]{article}
\usepackage{amsmath,amssymb,amsthm, hyperref, euscript}
\usepackage[matrix,arrow,curve]{xy}
\usepackage{graphicx}
\usepackage{tabularx}
\usepackage{float}
\usepackage{hyperref}
\usepackage{tikz}
\usetikzlibrary{matrix}

\usepackage[T1]{fontenc}

\usepackage[sc]{mathpazo}
\linespread{1.05}         

\DeclareFontFamily{T1}{pzc}{}
\DeclareFontShape{T1}{pzc}{m}{it}{1.8 <-> pzcmi8t}{}
\DeclareMathAlphabet{\mathpzc}{T1}{pzc}{m}{it}

\textwidth=140mm

\title{The Unique Path Lifting for Noncommutative Covering Projections}

\theoremstyle{plain}
\newtheorem{prop}{Proposition}[section]

\newtheorem{lem}[prop]{Lemma}
\newtheorem{thm}[prop]{Theorem}

\theoremstyle{definition}
\newtheorem{defn}[prop]{Definition}
\newtheorem{empt}[prop]{}
\newtheorem{rem}[prop]{Remark}

\theoremstyle{remark}

\chardef\bslash=`\\ 





\hfuzz1pc 







\newbox\ncintdbox \newbox\ncinttbox 
\setbox0=\hbox{$-$} \setbox2=\hbox{$\displaystyle\int$}
\setbox\ncintdbox=\hbox{\rlap{\hbox
    to \wd2{\hskip-.125em \box2\relax\hfil}}\box0\kern.1em}
\setbox0=\hbox{$\vcenter{\hrule width 4pt}$}
\setbox2=\hbox{$\textstyle\int$} \setbox\ncinttbox=\hbox{\rlap{\hbox
    to \wd2{\hskip-.175em \box2\relax\hfil}}\box0\kern.1em}


\begin{document}
\maketitle  \setlength{\parindent}{0pt}
\begin{center}
\author{}
{\textbf{Petr R. Ivankov*}\\
e-mail: * monster.ivankov@gmail.com \\
}
\end{center}

\vspace{1 in}

\begin{abstract}
\noindent

This article contains a noncommutative generalization of the topological path lifting problem. Noncommutative geometry has no paths and even points. However there are paths of *-automorphisms. It is proven that paths of *-automorphisms comply with unique path lifting.

\end{abstract}
\tableofcontents

\section{Introduction}
\begin{empt}
There is a significant problem in the algebraic topology, called the lifting problem. Let $p: \mathcal{E} \to \mathcal{B}$ and $f: \mathcal{X} \to \mathcal{B}$ continuous maps of topological spaces. The {\it lifting problem} \cite{spanier:at} for $f$ is to determine whether there is a continuous map $f': \mathcal{X} \to \mathcal{E}$ such that $f=p\circ f'$-that is, whether the dotted arrow in the diagram
\newline
\hspace*{\fill}
\begin{tikzpicture}
  \matrix (m) [matrix of math nodes,row sep=3em,column sep=4em,minimum width=2em] {
     	 & \mathcal{E} \\
     \mathcal{X} & \mathcal{B} \\};
  \path[-stealth]
    (m-2-1.east|-m-2-2) edge node [below] {$f$}(m-2-2)
    (m-1-2) edge node [right] {$p$} (m-2-2)
     (m-2-1)      edge [dashed]  node[above] {$f'$} (m-1-2);
\end{tikzpicture}
\hspace{\fill}
\newline
corresponds to a continuous map making the diagram commutative. If there is such map $f'$, then $f$ can be {\it lifted} to $\mathcal{E}$, and we call $f'$ a {\it lifting} or {\it lift} of $f$.
If $p$ is a covering projection and $\mathcal{X} = [0,1] \subset \mathbb{R}$ then $f$ can be lifted.
\end{empt}
\begin{defn}\cite{spanier:at}
A continous map $p:\mathcal{E} \to \mathcal{B}$ is said to have the {\it unique path lifting} if, given paths $\omega$ and $\omega'$ in $E$ such that $p \circ \omega = p \circ \omega'$ and $\omega(0)=\omega'(0)$, then $\omega=\omega'$.
\end{defn}
\begin{thm}\cite{spanier:at}\label{spanier_thm_un}
Let $p: \widetilde{\mathcal{X}} \to \mathcal{X}$ be a covering projection and let $f, g: \mathcal{Y} \to \widetilde{\mathcal{X}}$ be liftings of the same map (that is, $p \circ f = p \circ g$). If $\mathcal{Y}$ is connected and $f$ agrees with g for some point of $\mathcal{Y}$ then $f=g$.
\end{thm}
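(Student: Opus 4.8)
The plan is to run a standard connectedness argument on the \emph{agreement set}
\[
A = \{\, y \in \mathcal{Y} : f(y) = g(y) \,\},
\]
showing it is simultaneously nonempty, open, and closed in $\mathcal{Y}$. Since $\mathcal{Y}$ is connected and $A \neq \emptyset$ by hypothesis, this forces $A = \mathcal{Y}$, which is exactly the assertion $f = g$. The only structural input needed beyond elementary point-set topology is the defining local property of a covering projection: each point of $\mathcal{X}$ has an \emph{evenly covered} open neighborhood $U$, meaning $p^{-1}(U)$ is a disjoint union of open sheets, each carried homeomorphically onto $U$ by $p$.

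To prove $A$ is open, I would take $y \in A$, put $\tilde{x} = f(y) = g(y)$ and $x = p(\tilde{x})$, and choose an evenly covered neighborhood $U$ of $x$; then $\tilde{x}$ lies in a single sheet $S$ over $U$. Continuity of $f$ and $g$ yields a neighborhood $V$ of $y$ with $f(V) \subseteq S$ and $g(V) \subseteq S$. For each $y' \in V$ both $f(y')$ and $g(y')$ lie in $S$ and satisfy $p(f(y')) = p(g(y'))$, because $f$ and $g$ lift the same map; since $p$ is injective on $S$, this gives $f(y') = g(y')$, so $V \subseteq A$.

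To prove $A$ is closed, I would show its complement is open. Take $y \notin A$, so $f(y) \neq g(y)$ while $p(f(y)) = p(g(y)) = x$. Over an evenly covered neighborhood $U$ of $x$, the two distinct points $f(y)$ and $g(y)$ cannot share a sheet (where $p$ is injective), hence lie in disjoint sheets $S_1 \ni f(y)$ and $S_2 \ni g(y)$. Continuity again supplies a neighborhood $V$ of $y$ with $f(V) \subseteq S_1$ and $g(V) \subseteq S_2$, and disjointness of the sheets forces $f(y') \neq g(y')$ for every $y' \in V$, so $V$ lies in the complement of $A$.

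I do not expect a genuine obstacle: the argument is routine once the two crucial facts are marshalled, namely the injectivity of $p$ on each sheet (used for openness) and the disjointness of distinct sheets (used for closedness), together with the hypothesis that $f$ and $g$ lift the \emph{same} map. The only mildly delicate point is arranging that the chosen neighborhoods land entirely inside a single sheet, which follows from the openness of the sheets and the continuity of $f$ and $g$.
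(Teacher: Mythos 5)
Your proposal is correct: the open--closed argument on the agreement set, using injectivity of $p$ on each sheet for openness and disjointness of distinct sheets for closedness, is exactly the classical proof. The paper itself gives no proof of this theorem (it is quoted from Spanier's \emph{Algebraic Topology} as background), and your argument is precisely the one found in that reference, so there is nothing to reconcile.
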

\begin{rem}
From theorem \ref{spanier_thm_un} it follows that a covering projection has unique path lifting.
\end{rem}
\paragraph*{} We would like generalize above facts. Because noncommutative geometry has no points it has no a direct generalization of paths, but there is an implicit generalization. Let $\widetilde{\mathcal{X}} \to \mathcal{X}$ be a covering projection. The following diagram reflects the path lifting problem.
\newline
\hspace*{\fill}
\begin{tikzpicture}
  \matrix (m) [matrix of math nodes,row sep=3em,column sep=4em,minimum width=2em] {
     	 & \widetilde{\mathcal{X}} \\
     I = [0,1]  & \mathcal{X} \\};
  \path[-stealth]
    (m-2-1.east|-m-2-2) edge node [below] {$f$}(m-2-2)
    (m-1-2) edge node [right] {$p$} (m-2-2)
     (m-2-1)      edge [dashed]  node[above] {$f'$} (m-1-2);
\end{tikzpicture}
\hspace{\fill}
\newline
However above diagram can be replaced with an equivalent diagram
\newline
\hspace*{\fill}
 \begin{tikzpicture}
  \matrix (m) [matrix of math nodes,row sep=3em,column sep=4em,minimum width=2em] {
     	 & \mathrm{Homeo}(\widetilde{\mathcal{X}}) \\
     I = [0,1]  & \mathrm{Homeo}(\mathcal{X}) \\};
  \path[-stealth]
    (m-2-1.east|-m-2-2) edge node [below] {$f$}(m-2-2)
    (m-1-2) edge node [right] {$p$} (m-2-2)
     (m-2-1)      edge [dashed]  node[above] {$f'$} (m-1-2);
\end{tikzpicture}
\hspace{\fill}
 \newline
where $\mathrm{Homeo}$ means the group of homeomorphisms with {\it compact-open} topology (See \cite{spanier:at}). Noncommutative generalization of a locally compact space is a $C^*$-algebra \cite{arveson:c_alg_invt}. Similarly the generalization of $\mathrm{Homeo}(\mathcal{X})$ is the group  $\mathrm{Aut}(A)$ of *-automorphisms carries (at least) two different topologies making it into a topological group \cite{thomsem:ho_type_uhf}. The most important is {\it the topology of pointwise norm-convergence} based on the open sets
\begin{equation*}
\left\{\alpha \in \mathrm{Aut}(A) \ | \ \|\alpha(a)-a\| < 1 \right\}, \ a \in A.
\end{equation*}
\paragraph*{} The other topology is the {\it uniform norm-topology} based on the open sets
\begin{equation*}
\left\{\alpha \in \mathrm{Aut}(A) \ | \ \sup_{a \neq 0}\ \|a\|^{-1} \|\alpha(a)-a\| < \varepsilon \right\}, \ \varepsilon > 0
\end{equation*}
which corresponds to following "norm"
\begin{equation}\label{uniform_norm_topology_formula}
\|\alpha\| = \sup_{a \neq 0}\ \|a\|^{-1} \|\alpha(a)-a\| .
\end{equation}
Above formula does not really means a norm because $\mathrm{Aut}\left(A\right)$ is not a vector space. Henceforth the uniform norm-topology will be considered only.
Following diagram presents the noncommutative generalization of path lifting.
\newline
 \hspace*{\fill}
 \begin{tikzpicture}
  \matrix (m) [matrix of math nodes,row sep=3em,column sep=4em,minimum width=2em] {
     	 & \mathrm{Aut}\left(\widetilde{A}\right) \\
     I = [0,1]  & \mathrm{Aut}\left(A\right) \\};
  \path[-stealth]
    (m-2-1.east|-m-2-2) edge node [below] {$f$}(m-2-2)
    (m-1-2) edge node [right] {$p$} (m-2-2)
     (m-2-1)      edge [dashed]  node[above] {$f'$} (m-1-2);
\end{tikzpicture}
\hspace{\fill}
\newline

 A generalization of covering projections is described in my articles \cite{ivankov:cov_pr_nc_torus_fin,ivankov:infinite_cov_pr}. Following table contains necessary ingredients and their noncommutative analogues.
 \newline
 \newline
 \begin{tabular}{|c|c|}
 \hline
 General topology & Noncommutative geometry\\
 \hline
Locally compact Hausdorff space \cite{bourbaki_sp:gt} & $C^*$-algebra \cite{arveson:c_alg_invt}\\
A group of homeomorphism $\mathrm{Homeo}(X)$ \cite{spanier:at} & A group of *-automorphisms $\mathrm{Aut}\left(A\right)$ \cite{thomsem:ho_type_uhf} \\
The topology on $\mathrm{Homeo}\left(\mathcal{X}\right)$ \cite{spanier:at}    & The topology on $\mathrm{Aut}\left(A\right)$ \cite{thomsem:ho_type_uhf}  \\
Covering projection \cite{spanier:at}  & Noncommutative covering projection \cite{ivankov:cov_pr_nc_torus_fin,ivankov:infinite_cov_pr} \\

 \hline
 \end{tabular}
 \newline
 \newline
 \break
 Composition of these ingredients supplies a noncommutative generalization of the following topological theorem.
 \begin{thm}\label{comm_thm}
 Let $p: \widetilde{\mathcal{X}} \to \mathcal{X}$ be a covering projection. Any path $\omega: I \to \mathrm{Homeo}(\mathcal{X})$ such that $\omega(0)=\mathrm{Id}_{\mathcal{X}}$ can be uniquely lifted to the path $\widetilde{\omega}: I \to \mathrm{Homeo}(\widetilde{\mathcal{X}})$ such that $\widetilde{\omega}(0)=\mathrm{Id}_{\widetilde{\mathcal{X}}}$ , i.e. $p(\widetilde{\omega}(t)(x)) = \omega(t)(p(x))$, $\forall t \in [0,1], \ \forall x \in \widetilde{\mathcal{X}}$.
 \end{thm}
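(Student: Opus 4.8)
The plan is to reduce the statement to the homotopy lifting property (HLP) of covering projections together with the unique path lifting of Theorem \ref{spanier_thm_un}. First I would package the path $\omega$ into a single continuous homotopy. Define $F : I \times \widetilde{\mathcal{X}} \to \mathcal{X}$ by $F(t, x) = \omega(t)(p(x))$. Its continuity comes from the joint continuity of the evaluation $I \times \mathcal{X} \to \mathcal{X}$, $(t,y) \mapsto \omega(t)(y)$, which is the exponential-law adjoint (uncurrying) of the continuous path $\omega : I \to \mathrm{Homeo}(\mathcal{X})$; here I use that $\mathcal{X}$ is locally compact Hausdorff so that evaluation out of the compact-open topology is continuous. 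At $t = 0$ we have $F(0, \cdot) = \omega(0) \circ p = p$, so the identity $\mathrm{Id}_{\widetilde{\mathcal{X}}}$ is a lift of $F(0, \cdot)$ along $p$.

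Since a covering projection has the homotopy lifting property with respect to every space (a classical property, see \cite{spanier:at}), there is a unique continuous $\widetilde{F} : I \times \widetilde{\mathcal{X}} \to \widetilde{\mathcal{X}}$ with $p \circ \widetilde{F} = F$ and $\widetilde{F}(0, \cdot) = \mathrm{Id}_{\widetilde{\mathcal{X}}}$. I then set $\widetilde{\omega}(t) := \widetilde{F}(t, \cdot)$. By construction $p(\widetilde{\omega}(t)(x)) = F(t,x) = \omega(t)(p(x))$ and $\widetilde{\omega}(0) = \mathrm{Id}_{\widetilde{\mathcal{X}}}$, so the commutativity relation and the initial condition are immediate. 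Continuity of $t \mapsto \widetilde{\omega}(t)$ into the compact-open topology follows by currying $\widetilde{F}$, which is always valid for that topology; note $\widetilde{\mathcal{X}}$ is locally compact Hausdorff since $p$ is a local homeomorphism, so the same adjunction can also be run in reverse when needed.

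The step I expect to be the main obstacle is showing that each $\widetilde{\omega}(t)$ is actually a \emph{homeomorphism} rather than merely a continuous self-map, so that the lift genuinely lands in $\mathrm{Homeo}(\widetilde{\mathcal{X}})$. To produce an inverse I would apply the very same construction to the path $t \mapsto \omega(t)^{-1}$, obtaining $\widetilde{\eta}(t)$ with $\widetilde{\eta}(0) = \mathrm{Id}_{\widetilde{\mathcal{X}}}$. Consider $G(t, x) = \widetilde{\omega}(t)(\widetilde{\eta}(t)(x))$: a direct computation gives $p(G(t,x)) = \omega(t)(\omega(t)^{-1}(p(x))) = p(x)$, so for each fixed $x$ the path $t \mapsto G(t,x)$ is a lift of the constant path at $p(x)$ starting at $x$. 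The constant path $t \mapsto x$ is another such lift, so the unique path lifting forces $\widetilde{\omega}(t) \circ \widetilde{\eta}(t) = \mathrm{Id}_{\widetilde{\mathcal{X}}}$; the symmetric computation gives $\widetilde{\eta}(t) \circ \widetilde{\omega}(t) = \mathrm{Id}_{\widetilde{\mathcal{X}}}$. Hence each $\widetilde{\omega}(t)$ is a homeomorphism with continuous inverse $\widetilde{\eta}(t)$, and continuity of the inverse path in turn confirms that $\widetilde{\omega}$ is a genuine path in the topological group $\mathrm{Homeo}(\widetilde{\mathcal{X}})$.

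Finally, for uniqueness, suppose $\widetilde{\omega}'$ is another lift with $\widetilde{\omega}'(0) = \mathrm{Id}_{\widetilde{\mathcal{X}}}$ and $p(\widetilde{\omega}'(t)(x)) = \omega(t)(p(x))$. For each fixed $x$ the two paths $t \mapsto \widetilde{\omega}(t)(x)$ and $t \mapsto \widetilde{\omega}'(t)(x)$ are both lifts of the path $t \mapsto \omega(t)(p(x))$ and agree at $t = 0$; since $I$ is connected, Theorem \ref{spanier_thm_un} forces them to coincide for every $t$, whence $\widetilde{\omega} = \widetilde{\omega}'$. Thus the whole argument hinges on gluing the two classical facts — the homotopy lifting property and the unique path lifting — together by means of the exponential law.
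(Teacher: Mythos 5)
Your proposal is correct, but it is a genuinely fuller argument than the one the paper gives. The paper's entire proof of this theorem is the single sentence ``Follows from Theorem~\ref{spanier_thm_un}'', and that theorem (two lifts of the same map over a connected domain that agree at one point coincide) delivers only the \emph{uniqueness} half of the statement --- exactly your last paragraph, applied fibrewise with $\mathcal{Y}=I$. The existence of $\widetilde{\omega}$ and the fact that each $\widetilde{\omega}(t)$ is a homeomorphism rather than a mere continuous lift are left completely implicit in the paper, and these are precisely the pieces you supply: existence via the covering homotopy theorem (covering projections are fibrations, so the uncurried homotopy $F(t,x)=\omega(t)(p(x))$ lifts with initial value $\mathrm{Id}_{\widetilde{\mathcal{X}}}$), with the exponential law mediating between paths in $\mathrm{Homeo}(\mathcal{X})$ and jointly continuous homotopies, and invertibility via lifting the inverse path and cancelling by unique path lifting. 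Two caveats are worth flagging. First, your inverse-path step presupposes that $t\mapsto\omega(t)^{-1}$ is itself continuous, i.e.\ that inversion in $\mathrm{Homeo}(\mathcal{X})$ is continuous for the compact-open topology; this holds when $\mathcal{X}$ is compact Hausdorff (the map $(t,x)\mapsto(t,\omega(t)(x))$ is then a continuous bijection of a compact Hausdorff space, hence a homeomorphism) and, by Arens' theorem, when $\mathcal{X}$ is locally compact and locally connected, but it is not automatic for an arbitrary locally compact Hausdorff space. Since the paper itself declares $\mathrm{Homeo}(\mathcal{X})$ a topological group in this topology, your use of it is consistent with the paper's standing assumptions, but an explicit hypothesis would make the argument airtight. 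Second, a cosmetic point: $\widetilde{\mathcal{X}}$ is Hausdorff because points in a common fibre are separated by the sheets of an evenly covered neighbourhood, not merely because $p$ is a local homeomorphism.
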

 \begin{proof}
Follows from theorem \ref{spanier_thm_un}.
 \end{proof}

\paragraph{} Following notation is used in this article.
\newline
\newline
 \begin{tabular}{|c|c|}
 \hline
 Symbol & Meaning\\
 \hline
 $\mathrm{Aut}(A)$ & Group * - automorphisms of $C^*$-algebra $A$\\
 $\mathbb{C}$ (resp. $\mathbb{R}$)  & Field of complex (resp. real) numbers \\
$C(\mathcal{X})$ & $C^*$ - algebra of continuous complex valued
  functions on compact\\ & topological space $\mathcal{X}$\\

$C_0(\mathcal{X})$ & $C^*$ - algebra of continuous complex valued
  functions on locally compact \\ & topological space which tends to 0 at infinity $\mathcal{X}$\\

$M(A)$  & Multiplier algebra of $C^*$-algebra $A$\\
 $\mathbb{N}$ & The set of natural numbers\\

 \hline
 \end{tabular}

  \section{Noncommutative covering projections}
  \paragraph*{} In this section we recall some notions from the article \cite{ivankov:infinite_cov_pr} devoted to noncommutative covering projections.
 \subsection{Hermitian modules and functors}
\paragraph*{} Any continuous map from a compact space $\mathcal{X}$ to a compact space $\mathcal{Y}$ corresponds to the *-homomorphism $C(\mathcal{Y})\to C(\mathcal{X})$. It is not always true when $\mathcal{X}$ is not compact, a continuous map is rather a correspondence \cite{kakariadis:corr} between $C_0(\mathcal{Y})$ and $C_0(\mathcal{X})$.
 In this section we consider an algebraic generalization of continuous maps. Following text is in fact a citation of \cite{rieffel_morita}.

  \begin{defn}
 \cite{rieffel_morita} Let $B$ be a $C^*$-algebra. By a (left) {\it Hermitian $B$-module} we will mean the Hilbert space $H$ of a non-degenerate *-representation $A \rightarrow B(H)$. Denote by $\mathbf{Herm}(B)$ the category of Hermitian $B$-modules.
  \end{defn}
 \begin{empt}
 \cite{rieffel_morita} Let $A$, $B$ be $C^*$-algebras. In this section we will study some general methods for construction of functors from  $\mathbf{Herm}(B)$ to  $\mathbf{Herm}(A)$.
 \end{empt}
 \begin{defn} \cite{rieffel_morita}
 Let $B$ be a $C^*$-algebra. By (right) {\it pre-$B$-rigged space} we mean a vector space, $X$, over complex numbers on which $B$ acts by means of linear transformations in such a way that $X$ is a right $B$-module (in algebraic sense), and on which there is defined a $B$-valued sesquilinear form $\langle,\rangle_X$ conjugate linear in the first variable, such that
 \begin{enumerate}
 \item $\langle x, x \rangle_B \ge 0$
 \item $\left(\langle x, y \rangle_X\right)^* = \langle y, x \rangle_X$
 \item $\langle x, yb \rangle_B = \langle x, y \rangle_Xb$
 \end{enumerate}
 \end{defn}
 \begin{empt}
 It is easily seen that if we factor a pre-$B$-rigged space by subspace of the elements $x$ for which $\langle x, x \rangle_B = 0$, the quotient becomes in a natural way a pre-$B$-rigged space having the additional property that inner product is definite, i.e. $\langle x, x \rangle_X > 0$ for any non-zero $x\in X$. On a pre-$B$-rigged space with definite inner product we can define a norm $\|\|$ by setting
 \begin{equation}\label{rigged_norm_eqn}
 \|x\|=\|\langle x, x \rangle_X\|^{1/2}, \
 \end{equation}
 From now on we will always view a  pre-$B$-rigged space  with definite inner product as being equipped with this norm. The completion of $X$ with this norm is easily seen to become again a pre-$B$-rigged space.
 \end{empt}
 \begin{defn}
 \cite{rieffel_morita} Let $B$ be a $C^*$-algebra. By a {\it $B$-rigged space} or {\it Hilbert $B$-module} we will mean a pre-$B$-rigged space, $X$, satisfying the following conditions:
 \begin{enumerate}
 \item If $\langle x, x \rangle_X\ = 0$ then $x = 0$, for all $x \in X$,
 \item $X$ is complete for the norm defined in (\ref{rigged_norm_eqn}).
 \end{enumerate}
 \end{defn}
 \begin{rem}
 In many publications the "Hilbert $B$-module" term is used instead "rigged $B$-module".
 \end{rem}
 \begin{empt}
 Viewing a $B$-rigged space as a generalization of an ordinary Hilbert space, we can define what we mean by bounded operators on a $B$-rigged space.
 \end{empt}
 \begin{defn}\cite{rieffel_morita}
 Let $X$ be a $B$-rigged space. By a {\it bounded operator} on $X$ we mean a linear operator, $T$, from $X$ to itself which satisfies following conditions:
 \begin{enumerate}
 \item for some constant $k_T$ we have
 \begin{equation}\nonumber
 \langle Tx, Tx \rangle_X \le k_T \langle x, x \rangle_X, \ \forall x\in X,
 \end{equation}
 or, equivalently $T$ is continuous with respect to the norm of $X$.
 \item there is a continuous linear operator, $T^*$, on $X$ such that
 \begin{equation}\nonumber
 \langle Tx, y \rangle_X = \langle x, T^*y \rangle_X, \ \forall x, y\in X.
 \end{equation}
 \end{enumerate}
 It is easily seen that any bounded operator on a $B$-rigged space will automatically commute with the action of $B$ on $X$ (because it has an adjoint). We will denote by $\mathcal{L}(X)$ (or $\mathcal{L}_B(X)$ there is a chance of confusion) the set of all bounded operators on $X$. Then it is easily verified than with the operator norm $\mathcal{L}(X)$ is a $C^*$-algebra.
 \end{defn}
 \begin{defn}\cite{pedersen:ca_aut} If $X$ is a $B$-rigged module then
 denote by $\theta_{\xi, \zeta} \in \mathcal{L}_B(X)$   such that
 \begin{equation}\nonumber
 \theta_{\xi, \zeta} (\eta) = \zeta \langle\xi, \eta \rangle_X , \ (\xi, \eta, \zeta \in X)
 \end{equation}
 Norm closure of  a generated by such endomorphisms ideal is said to be the {\it algebra of compact operators} which we denote by $\mathcal{K}(X)$. The $\mathcal{K}(X)$ is an ideal of  $\mathcal{L}_B(X)$. Also we shall use following notation $\xi\rangle \langle \zeta \stackrel{\text{def}}{=} \theta_{\xi, \zeta}$.
 \end{defn}

 \begin{defn}\cite{rieffel_morita}\label{corr_defn}
 Let $A$ and $B$ be $C^*$-algebras. By a {\it Hermitian $B$-rigged $A$-module} we mean a $B$-rigged space, which is a left $A$-module by means of *-homomorphism of $A$ into $\mathcal{L}_B(X)$.
 \end{defn}
 \begin{rem}
 Hermitian $B$-rigged $A$-modules are also named  as {\it $B$-$A$-correspondences} (See, for example \cite{kakariadis:corr}).
 \end{rem}
 \begin{empt}\label{herm_functor_defn}
 Let $X$ be a Hermitian $B$-rigged $A$-module. If $V\in \mathbf{Herm}(B)$ then we can form the algebraic tensor product $X \otimes_{B_{\mathrm{alg}}} V$, and equip it with an ordinary pre-inner-product which is defined on elementary tensors by
 \begin{equation}\nonumber
 \langle x \otimes v, x' \otimes v' \rangle = \langle \langle x',x \rangle_B v, v' \rangle_V.
 \end{equation}
 Completing the quotient $X \otimes_{B_{\mathrm{alg}}} V$ by subspace of vectors of length zero, we obtain an ordinary Hilbert space, on which $A$ acts (by $a(x \otimes v)=ax\otimes v$) to give a  *-representation of $A$. We will denote the corresponding Hermitian module by $X \otimes_{B} V$. The above construction defines a functor $X \otimes_{B} -: \mathbf{Herm}(B)\to \mathbf{Herm}(A)$ if for $V,W \in \mathbf{Herm}(B)$ and $f\in \mathrm{Hom}_B(V,W)$ we define $f\otimes X \in \mathrm{Hom}_A(V\otimes X, W\otimes X)$ on elementary tensors by $(f \otimes X)(x \otimes v)=x \otimes f(v)$.	We can define action of $B$ on $V\otimes X$ which is defined on elementary tensors by
 \begin{equation}\nonumber
 b(x \otimes v)= (x \otimes bv) = x b \otimes v.
 \end{equation}
 \end{empt}

\subsection{Galois rigged modules}
\begin{defn}\label{herm_a_g_defn}\cite{ivankov:infinite_cov_pr} Let $A$ be a $C^*$-algebra, $G$ is a finite or countable group which acts on $A$. We say that  $H \in \mathbf{Herm}(A)$ is a {\it $A$-$G$ Hermitian module} if
\begin{enumerate}
\item Group $G$ acts on $H$ by unitary $A$-linear isomorphisms,
\item There is a subspace $H^G \subset H$ such that
\begin{equation}\label{g_act}
H = \bigoplus_{g\in G}gH^G.
\end{equation}
\end{enumerate}
Let $H$, $K$ be  $A$-$G$ Hermitian modules, a morphism $\phi: H\to K$ is said to be a $A$-$G$-morphism if $\phi(gx)=g\phi(x)$ for any $g \in G$. Denote by $\mathbf{Herm}(A)^G$ a category of  $A$-$G$ Hermitian modules and $A$-$G$-morphisms.
\end{defn}

\begin{defn}\cite{ivankov:infinite_cov_pr}
Let $H$ be $A$-$G$ Hermitian module, $B\subset M(A)$ is sub-$C^*$-algebra such that $(ga)b = g(ab)$,  $b(ga) = g(ba)$, for any $a\in A$, $b \in B$, $g \in G$.
There is a functor $(-)^G: \mathbf{Herm}(A)^G \to\mathbf{Herm}(B)$ defined by following way
\begin{equation}
H \mapsto H^G.
\end{equation}
This functor is said to be the {\it invariant functor}.
\end{defn}

\begin{defn}\cite{ivankov:infinite_cov_pr}
Let $_AX_B$ be a Hermitian $B$-rigged $A$-module, $G$ is finite or countable group such that
\begin{itemize}
\item $G$ acts on $A$ and $X$,
\item Action of $G$ is equivariant, i.e $g (a\xi) = (ga) (g\xi)$ , and $B$ invariant, i.e $g(\xi b)=(g\xi)b$ for any $\xi \in X$, $b \in B$, $a\in A$, $g \in G$,
\item Inner-product  of $G$ is equivariant, i.e $\langle g\xi, g \zeta\rangle_X = \langle\xi, \zeta\rangle_X$ for any $\xi, \zeta \in X$,  $g \in G$.
\end{itemize}
Then we say that  $_AX_B$ is a {\it $G$-equivariant $B$-rigged $A$-module}.
\end{defn}
\begin{empt}
Let $_AX_B$ be a  $G$-equivariant $B$-rigged $A$-module. Then for any  $H\in \mathbf{Herm}(B)$ there is an action of $G$  on $X\otimes_B H$ such that
\begin{equation}
g \left(x \otimes \xi\right) = \left(x \otimes g\xi\right).
\end{equation}

\end{empt}

\begin{defn}\label{inf_galois_defn}\cite{ivankov:infinite_cov_pr}
Let $_AX_B$ be a $G$-equivariant $B$-rigged $A$-module. We say that  $_AX_B$ is {\it $G$-Galois $B$-rigged $A$-module} if it satisfies following conditions:
\begin{enumerate}
\item  $X \otimes_B H$ is a $A$-$G$ Hermitian module, for any $H \in \mathbf{Herm}(B)$,
\item A pair   $\left(X \otimes_B -, \left(-\right)^G\right)$ such that
\begin{equation}\nonumber
X \otimes_B -: \mathbf{Herm}(B) \to \mathbf{Herm}(A)^G,
\end{equation}
\begin{equation}\nonumber
(-)^G: \mathbf{Herm}(A)^G \to \mathbf{Herm}(B).
\end{equation}

is a pair of inverse equivalence.

\end{enumerate}

\end{defn}

\begin{thm}\label{nc_inf_cov_thm}\cite{ivankov:infinite_cov_pr}
Let $A$ and $\widetilde{A}$ be $C^*$-algebras,  $_{\widetilde{A}}X_A$ be a $G$-equivariant $A$-rigged $\widetilde{A}$-module. Let $I$ be a finite or countable set of indices,  $\{e_i\}_{i\in I} \subset A$, $\{\xi_i\}_{i\in I} \subset \ _{\widetilde{A}}X_A$ such that
\begin{enumerate}
\item
\begin{equation}\label{1_mb}
1_{M(A)} =  \sum_{i\in I}^{}e^*_ie_i,
\end{equation}
\item
\begin{equation}\label{1_mkx}
1_{M(\mathcal{K}(X))} = \sum_{g\in G}^{} \sum_{i \in I}^{}g\xi_i\rangle \langle g\xi_i ,
\end{equation}
\item
\begin{equation}\label{ee_xx}
\langle \xi_i, \xi_i \rangle_X = e_i^*e_i,
\end{equation}
\item
\begin{equation}\label{g_ort}
\langle g\xi_i, \xi_i\rangle_X=0, \ \text{for any nontrivial} \ g \in G.
\end{equation}
\end{enumerate}
Then $_{\widetilde{A}}X_A$ is a $G$-Galois $A$-rigged $\widetilde{A}$-module.

\end{thm}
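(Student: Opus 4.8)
The plan is to check the two requirements of Definition~\ref{inf_galois_defn} for the functors $X \otimes_A -\colon \mathbf{Herm}(A) \to \mathbf{Herm}(\widetilde{A})^G$ and $(-)^G\colon \mathbf{Herm}(\widetilde{A})^G \to \mathbf{Herm}(A)$. The basic tool is the reconstruction formula obtained from hypothesis~(\ref{1_mkx}): evaluating the identity $1_{M(\mathcal{K}(X))} = \sum_{g\in G}\sum_{i\in I} g\xi_i\rangle\langle g\xi_i$ on an arbitrary $\eta\in X$ yields $\eta = \sum_{g\in G}\sum_{i\in I} g\xi_i\,\langle g\xi_i, \eta\rangle_X$ (the sum converging strictly), so that $\{g\xi_i\}_{g\in G,\,i\in I}$ is a frame for the Hilbert $A$-module $X$. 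I will use it to pass between elements of $X$ and their coordinates $\langle g\xi_i, \eta\rangle_X\in A$ throughout.

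First I would fix $H\in \mathbf{Herm}(A)$ and verify that $X\otimes_A H$ is an $\widetilde{A}$-$G$ Hermitian module in the sense of Definition~\ref{herm_a_g_defn}. The $G$-action is inherited from $X$; it is unitary because $\langle\,,\rangle_X$ is $G$-equivariant, and $\widetilde{A}$-linear because the left $\widetilde{A}$-action on $X$ commutes with the $G$-action. For the grading I set $N := \overline{\span}\{\xi_i\otimes w : i\in I,\ w\in H\}\subseteq X\otimes_A H$ as the candidate fundamental-domain subspace $(X\otimes_A H)^G$. The reconstruction formula, applied in the form $x\otimes v = \sum_{g,i} g\xi_i\otimes\langle g\xi_i, x\rangle_X v$, shows that the $G$-translates $gN$ together span $X\otimes_A H$; and the orthogonality hypothesis~(\ref{g_ort}), rewritten by equivariance as $\langle g\xi_i, h\xi_j\rangle_X = \langle \xi_i, g^{-1}h\,\xi_j\rangle_X$, shows that $gN\perp hN$ whenever $g\neq h$. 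Together these give the orthogonal decomposition $X\otimes_A H = \bigoplus_{g\in G} g\,(X\otimes_A H)^G$ required by condition~(1) of Definition~\ref{inf_galois_defn}.

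Next I would produce the two natural isomorphisms exhibiting the inverse equivalence. For the unit I take $\Psi_H\colon H\to X\otimes_A H$ induced by $v\mapsto \sum_{i\in I}\xi_i\otimes v$; using the normalisation $\sum_i e_i^*e_i = 1_{M(A)}$ of~(\ref{1_mb}) together with $\langle\xi_i,\xi_i\rangle_X = e_i^*e_i$ of~(\ref{ee_xx}) and the index-orthogonality, a direct computation of $\langle\Psi_H(v),\Psi_H(v)\rangle$ shows $\Psi_H$ is an $A$-linear isometry, and one checks its image is exactly $(X\otimes_A H)^G$; naturality in $H$ is clear, so $(-)^G\circ(X\otimes_A -)\cong \mathrm{Id}_{\mathbf{Herm}(A)}$. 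For the counit, given $K\in \mathbf{Herm}(\widetilde{A})^G$ with decomposition $K = \bigoplus_g gK^G$, I would define $\Theta_K\colon X\otimes_A K^G\to K$ from the frame expansion and the inclusions $gK^G\subseteq K$, and verify with~(\ref{ee_xx})--(\ref{g_ort}) that it is a surjective, $G$-equivariant $\widetilde{A}$-linear isometry, hence a natural isomorphism $(X\otimes_A -)\circ(-)^G\cong \mathrm{Id}_{\mathbf{Herm}(\widetilde{A})^G}$. This yields condition~(2) of Definition~\ref{inf_galois_defn}.

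The main obstacle is the orthogonality step of the second paragraph: turning the four algebraic identities into geometric statements, namely that the subspaces $gN$ are mutually orthogonal and that $\Psi_H$ is isometric with image precisely $(X\otimes_A H)^G$. This is exactly where the hypotheses must interlock — (\ref{g_ort}) must annihilate all off-diagonal ($g\neq e$) contributions, including those across distinct indices $i\neq j$, while (\ref{1_mb}) and (\ref{ee_xx}) force the diagonal block to reproduce the unit of $A$ — and it is the only place where more than formal bookkeeping is needed. A secondary, purely technical point is the convergence of the sums over the countable sets $I$ and $G$, which must be read strictly in $M(\mathcal{K}(X))$ and then transported to bounded operators on the Hilbert space $X\otimes_A H$.
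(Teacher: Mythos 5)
A preliminary remark on the comparison itself: this paper never proves Theorem~\ref{nc_inf_cov_thm} --- it is quoted from \cite{ivankov:infinite_cov_pr} and used as a black box --- so your proposal can only be assessed on its own merits, not against an in-paper argument.

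Assessed that way, it has a genuine gap, and it sits exactly at the step you yourself flagged as ``the main obstacle'' and then asserted rather than proved. Hypothesis \eqref{g_ort} is a \emph{same-index} statement: $\langle g\xi_i,\xi_i\rangle_X=0$ for nontrivial $g$. After rewriting by equivariance, $\langle g\xi_i,h\xi_j\rangle_X=\langle \xi_i,g^{-1}h\,\xi_j\rangle_X$, hypothesis \eqref{g_ort} applies only when $i=j$. Your argument needs strictly more, in two places: (a) $\langle g\xi_i,\xi_j\rangle_X=0$ for \emph{all} $i,j$ and nontrivial $g$, to conclude $gN\perp hN$ for $N=\overline{\mathrm{span}}\{\xi_i\otimes w\}$; and (b) the ``index-orthogonality'' $\langle \xi_i,\xi_j\rangle_X=0$ for $i\neq j$, to make $\Psi_H(v)=\sum_i\xi_i\otimes v$ an isometry --- indeed $\langle\Psi_H(v),\Psi_H(v)\rangle=\sum_{i,j}\langle\langle\xi_j,\xi_i\rangle_X v,v\rangle_H$, and only the diagonal part $\sum_i\langle e_i^*e_iv,v\rangle_H=\langle v,v\rangle_H$ is controlled by \eqref{1_mb} and \eqref{ee_xx}. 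Neither (a) nor (b) is a hypothesis, and both fail in the very example the theorem is meant to generalize: for a topological covering $p:\widetilde{\mathcal{X}}\to\mathcal{X}$ take a partition of unity $\sum_i|e_i|^2=1$ subordinate to evenly covered sets $V_i$ and let $\xi_i$ be the lift of $e_i$ to a single sheet $U_i$ over $V_i$. Then \eqref{1_mb}--\eqref{g_ort} all hold, yet $\langle\xi_i,\xi_j\rangle_X\neq0$ whenever $V_i\cap V_j\neq\emptyset$, and $\langle g\xi_i,\xi_j\rangle_X\neq0$ whenever $U_j$ meets $gU_i$; such overlaps are unavoidable when the patches cover a connected space. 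Consequently your $N$ is not orthogonal to its translates, so it is not a valid $(X\otimes_AH)^G$ in the sense of \eqref{g_act}, and $\Psi_H$ is not isometric. (Note also an internal inconsistency: the image of $\Psi_H$ is the closure of $\{\sum_i\xi_i\otimes v\}$, which is in general a proper subspace of $N$, so your two candidate descriptions of the fundamental subspace disagree.) The real content of the theorem is precisely what is skipped here: manufacturing a grading subspace from \emph{overlapping} data --- in the commutative picture, disjointifying the patches into a genuine fundamental domain, which uses cutoffs that do not live in the $C^*$-algebra (measurable, not continuous, characteristic functions) and therefore needs an additional device beyond the frame identity \eqref{1_mkx}, e.g.\ passing to weak closures or an explicit inductive exhaustion.

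A secondary inaccuracy: the $G$-action on $X\otimes_AH$, $g(x\otimes v)=gx\otimes v$, is not $\widetilde{A}$-linear as you claim; by the equivariance axiom $g(\widetilde{a}x)=(g\widetilde{a})(gx)$ it is covariant, and it commutes with the left action only when $G$ acts trivially on $\widetilde{A}$. That Definition~\ref{herm_a_g_defn} literally asks for $A$-linearity is a defect of the paper's wording, not support for your step.
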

\begin{defn}\label{sub_def}\cite{ivankov:infinite_cov_pr}
Consider a situation from the theorem \ref{nc_inf_cov_thm}. Norm completion of the generated by operators
\begin{equation*}
g\xi_i^* \rangle \langle g \xi_i a; \ g \in G, \ i \in I, \ a \in M(A)
\end{equation*}
algebra is said to be the {\it subordinated to $\{\xi_i\}_{i \in I}$ algebra}. If $\widetilde{A}$ is the subordinated to $\{\xi_i\}_{i \in I}$ then
\begin{enumerate}
\item $G$ acts on $\widetilde{A}$ by following way
\begin{equation*}
g \left( \ g'\xi_i^* \rangle \langle g' \xi_i a \right) =  gg'\xi_i^* \rangle \langle gg' \xi_i a; \ a,a \in M(A).
\end{equation*}
\item $X$ is a left $A$ module, moreover $_{\widetilde{A}}X_A$ is a  $G$-Galois $A$-rigged $\widetilde{A}$-module.
\item There is a natural $G$-equivariant *-homomorphism $\varphi: A \to M\left(\widetilde{A}\right)$, $\varphi$ is equivariant, i.e.
\begin{equation}
 \varphi(a)(g\widetilde{a})= g \varphi(a)(\widetilde{a}); \ a \in A, \ \widetilde{a}\in \widetilde{A}.
\end{equation}
\end{enumerate}
A quadruple $\left(A, \widetilde{A}, _{\widetilde{A}}X_A, G\right)$ is said to be a {\it Galois quadruple}.
\end{defn}
\begin{rem}
It is shown \cite{ivankov:infinite_cov_pr} that, if $A$ is a commutative $C^*$-algebra then the Galois quadruple $\left(A, \widetilde{A}, _{\widetilde{A}}X_A, G\right)$ corresponds to a topological covering projection. So a Galois quadruple is a generalization of a topological covering projection.
\end{rem}
\begin{rem}
Henceforth subordinated algebras only are regarded as noncommutative generalizations of covering projections.
\end{rem}

\begin{lem}\label{aut_norm_lem}
Let $\left(A, \widetilde{A}, _{\widetilde{A}}X_A, G\right)$ be a Galois quadruple, and let $g \in G$ be a nontrivial element. Then $\|g\| \ge 1$ where $\|g\|$ is given by \eqref{uniform_norm_topology_formula}.
\end{lem}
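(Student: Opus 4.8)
The plan is to exhibit a single nonzero element of $\widetilde A$ on which $g$ already displaces the norm by the full amount, so that the supremum defining $\|g\|$ (with $g$ viewed as a $*$-automorphism of $\widetilde A$, as in Definition \ref{sub_def}) in \eqref{uniform_norm_topology_formula} is at least $1$. The natural candidate is a ``rank one'' generator of the subordinated algebra. By \eqref{1_mkx} the family $\{\xi_i\}_{i\in I}$ cannot be identically zero, so I fix an index $i$ with $\xi_i\neq 0$ and set $p:=\xi_i\rangle\langle\xi_i=\theta_{\xi_i,\xi_i}$. Taking $g'=e$ and $a=1_{M(A)}$ in Definition \ref{sub_def} shows $p\in\widetilde A$; it is a nonzero positive element of the $C^*$-algebra $\widetilde A$, and $g(p)=g\xi_i\rangle\langle g\xi_i$ by the description of the $G$-action in that same definition.

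The first key step is to show that $p$ and $g(p)$ are orthogonal, i.e. $p\,g(p)=g(p)\,p=0$. Evaluating the composite operator on $\eta\in X$ by means of $\theta_{\xi,\zeta}(\eta)=\zeta\langle\xi,\eta\rangle_X$ gives
\begin{equation*}
\bigl(p\,g(p)\bigr)(\eta)=\xi_i\bigl\langle\xi_i,\,g\xi_i\langle g\xi_i,\eta\rangle_X\bigr\rangle_X=\xi_i\,\langle\xi_i,g\xi_i\rangle_X\,\langle g\xi_i,\eta\rangle_X,
\end{equation*}
and since $\langle\xi_i,g\xi_i\rangle_X=\bigl(\langle g\xi_i,\xi_i\rangle_X\bigr)^*=0$ by \eqref{g_ort} (here $g$ is nontrivial), this vanishes. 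The relation $g(p)\,p=0$ then follows by taking adjoints, as $p$ and $g(p)$ are self-adjoint. Thus $\{p,g(p)\}$ is a pair of mutually orthogonal positive elements of $\widetilde A$.

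The second step records two norm facts. Because $g$ is a $*$-automorphism it is isometric, so $\|g(p)\|=\|p\|$ (equivalently one may combine equivariance of the inner product with $\|\theta_{\xi,\xi}\|=\|\langle\xi,\xi\rangle_X\|$). Finally, for orthogonal positive elements the difference has norm equal to the larger of the two: $p$ and $g(p)$ commute, since their product vanishes and they are self-adjoint, so within the commutative $C^*$-subalgebra they generate they correspond to functions with disjoint supports, whence $\|g(p)-p\|=\max\{\|p\|,\|g(p)\|\}=\|p\|$. Dividing by $\|p\|\neq 0$ gives $\|p\|^{-1}\|g(p)-p\|=1$, so the supremum in \eqref{uniform_norm_topology_formula} is at least $1$, which is the assertion.

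The only genuinely delicate point is the orthogonality $p\,g(p)=0$, and this is exactly where hypothesis \eqref{g_ort} of the Galois quadruple enters; everything else is standard $C^*$-algebra bookkeeping. I would take care with the convention for $\theta_{\xi,\zeta}$ so that the inner products collapse in the correct order, but no estimate or limiting argument is needed — a single generator suffices.
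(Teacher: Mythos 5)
Your proposal is correct and follows essentially the same route as the paper: both pick a generator of the subordinated algebra (you take the positive element $\xi_i\rangle\langle\xi_i$, the paper a general $g'\xi_i^*\rangle\langle g'\xi_i a$), use \eqref{g_ort} to show it is orthogonal to its $g$-translate, and conclude that $g$ displaces it by at least its own norm. The only cosmetic difference is the final estimate — you invoke disjoint supports of commuting orthogonal positive elements to get $\|p-g(p)\|=\|p\|$ exactly, while the paper uses the $C^*$-identity $\|x-gx\|^2=\|(x-gx)^*(x-gx)\|\ge\|x\|^2$ — and your explicit check that some $\xi_i\neq 0$ via \eqref{1_mkx} is a small point the paper leaves implicit.
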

\begin{proof}
Let  $x =  \ g'\xi_i^* \rangle \langle g' \xi_i a$, ($a\in M(A)$) then from \eqref{g_ort} it follows that
\begin{equation*}
x (gx^*) = x^* (gx)= (gx)x^*=(gx^*)x =0.
\end{equation*}
 From
\begin{equation*}
\|x - gx\|^2 = \|(x - gx)^* (x - gx)\| = \|xx^* + (gx)(gx^*)\| \ge \|x\|^2.
\end{equation*}
it follows that $\|g\| \ge 1$.
\end{proof}
\begin{lem}\label{shift_lem}
Let $\left(A, \widetilde{A}, _{\widetilde{A}}X_A, G\right)$ be a Galois quadruple, and let $\alpha \in \mathrm{Aut}\left(\widetilde{A}\right)$ be such that $\alpha$ is a right $A$-module isomorphism, i.e.
\begin{equation*}
\alpha(\widetilde{a})a = \alpha(\widetilde{a}a); \ \forall \widetilde{a} \in \widetilde{A}, \ \forall a \in A.
\end{equation*}
Then $\alpha \in G$.
\end{lem}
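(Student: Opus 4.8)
The plan is to read the hypothesis as the noncommutative form of ``$\alpha$ covers the identity on the base'' and then to show, as in the topological case, that this forces $\alpha$ to be a deck transformation, i.e. an element of $G$. First I would reformulate the right $A$-linearity. Let $\overline{\alpha}$ denote the extension of $\alpha$ to $M(\widetilde{A})$, where the right action is $\widetilde{a}\cdot a = \widetilde{a}\,\varphi(a)$. For every $a \in A$, $\widetilde{a} \in \widetilde{A}$ we have $\overline{\alpha}(\widetilde{a}\,\varphi(a)) = \alpha(\widetilde{a})\,\overline{\alpha}(\varphi(a))$, while the hypothesis gives $\alpha(\widetilde{a}\,\varphi(a)) = \alpha(\widetilde{a})\,\varphi(a)$; since $\alpha$ is surjective and $\widetilde{A}$ acts nondegenerately, comparing these yields $\overline{\alpha}(\varphi(a)) = \varphi(a)$ for all $a$. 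Thus right $A$-linearity is equivalent to $\overline{\alpha}|_{\varphi(A)} = \mathrm{id}$. Every $g \in G$ already has this property: the equivariance relation of Definition~\ref{sub_def} forces $\overline{g}(\varphi(a)) = \varphi(a)$, so the content of the lemma is the converse, that no automorphism fixing $\varphi(A)$ escapes $G$.

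Next I would move to the spatial picture on the Hilbert $A$-module $X$, where $\widetilde{A}$ acts and the translated frame $\{g\xi_i\}_{g \in G,\, i \in I}$ resolves the identity by \eqref{1_mkx}. Put $P_g = \sum_{i} \theta_{g\xi_i,\, g\xi_i}$, so that $\sum_{g} P_g = 1_{M(\mathcal{K}(X))}$ and the $G$-action of Definition~\ref{sub_def} shifts $P_{g'}$ to $P_{gg'}$. Combining invariance of the inner product with the orthogonality \eqref{g_ort} gives $\langle g\xi_i, g'\xi_i\rangle_X = \langle \xi_i, g^{-1}g'\xi_i\rangle_X = 0$ for $g \neq g'$, so the frame genuinely splits into orthogonal $G$-sectors index by index; together with \eqref{ee_xx} this says the $e$-sector generators $\theta_{\xi_i,\xi_i}$ carry exactly the ``$A$-fibre'' data $e_i^*e_i$.

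The heart of the argument is to apply $\alpha$ to the generators $\theta_{\xi_i,\xi_i}$ and expand $\alpha(\theta_{\xi_i,\xi_i})$ against the complete frame \eqref{1_mkx}. Using $\overline{\alpha}(\varphi(a)) = \varphi(a)$ from the first step, the relations \eqref{ee_xx}, \eqref{g_ort}, and positivity, I expect to show that this expansion cannot mix cosets: there is a single $g_0 \in G$, independent of $i$, with $\alpha(\theta_{\xi_i,\xi_i}) = \theta_{g_0\xi_i,\, g_0\xi_i}$, i.e. $\alpha$ sends the whole $e$-sector into the $g_0$-sector. Once this localization to one coset is in hand, multiplicativity, the fixed-$\varphi(A)$ property, and norm-density of the generators propagate $\alpha(\theta_{g'\xi_i,\, g'\xi_i}\varphi(a)) = \theta_{g_0 g'\xi_i,\, g_0 g'\xi_i}\varphi(a)$ to all of $\widetilde{A}$, which is precisely the action of $g_0$; hence $\alpha = g_0 \in G$. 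Lemma~\ref{aut_norm_lem} guarantees that distinct group elements give distinct automorphisms (each nontrivial $g$ has $\|g\| \ge 1$), so the identification $\alpha = g_0$ is unambiguous.

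I expect the localization step to be the main obstacle, since it is where ``connectedness'' of the covering, implicit in the Galois equivalence of Definition~\ref{inf_galois_defn}, must be used to rule out $\alpha$ scattering the $e$-sector across several cosets; the orthogonality \eqref{g_ort} and completeness \eqref{1_mkx} have to be played against the fixed base $\varphi(A)$ precisely here. An alternative is to first implement $\alpha$ spatially by an $A$-linear unitary $U \in \mathcal{L}_A(X)$ commuting with $\varphi(A)$, with $\alpha = \mathrm{Ad}(U)$, and then show $U$ carries each $\xi_i$ into $G\xi_i$ with a common group element; but constructing $U$ and controlling it seems no easier than the direct frame computation.
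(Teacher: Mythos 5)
Your opening reduction of right $A$-linearity to ``$\overline{\alpha}$ fixes $\varphi(A)$ pointwise'' is sound (and, taking adjoints, it also gives left $\varphi(A)$-linearity, which is what one actually needs to see that $\alpha$ preserves each ideal $e_i^*e_i\widetilde{A}$). But the proposal stalls exactly where the lemma lives: the ``localization'' claim, that there is a single $g_0 \in G$ with $\alpha\left(\theta_{\xi_i,\xi_i}\right) = \theta_{g_0\xi_i,\,g_0\xi_i}$ for all $i$, is announced (``I expect to show\dots'') but never argued. This is not a routine verification that can be deferred: right $A$-linearity alone does \emph{not} prevent coset mixing, because the sectors $\widetilde{\mathfrak{I}}_{ig} = g\xi_i^* \rangle \langle g \xi_i M(A)$ are pairwise isomorphic as right $A$-modules, and a unitary module automorphism of a direct sum of isomorphic modules can perfectly well rotate between summands --- think of $(x,y) \mapsto \left(\frac{x+y}{\sqrt{2}}, \frac{x-y}{\sqrt{2}}\right)$ on $M \oplus M$. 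Whatever rules this out must exploit that $\alpha$ is multiplicative and $*$-preserving, e.g.\ that elements of distinct sectors annihilate one another ($x y^* = 0$ by \eqref{g_ort}, a relation every $*$-automorphism preserves); your outline never identifies this ingredient, nor the one forcing $g_0$ to be independent of $i$ and of $g'$.

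For comparison, the paper argues through the module decomposition rather than the frame expansion: it writes $e_i^*e_i\widetilde{A} = \bigoplus_{g\in G}\widetilde{\mathfrak{I}}_{ig}$ as in \eqref{ideal_dir_sum}, notes that each summand is isomorphic to $\mathfrak{I}_i$ as a right $A$-module, asserts that the $A$-linear automorphism $\alpha$ permutes the summands, and then uses transitivity of $G$ acting on itself to realize that permutation as left translation by a single $g \in G$, so that $\alpha$ is forced to act by the formula \eqref{act_form} and hence $\alpha = g$. Your plan is in spirit the same reduction (the identity sector is carried onto a single $g_0$-sector, then density of the generators propagates this to all of $\widetilde{A}$), so you have located the correct skeleton; but the two assertions that carry all the weight --- no mixing of sectors, and one group element serving all indices --- are precisely the lemma, and they are absent from your write-up. (To be fair, the paper's own justification of ``transposes summands'' is terse for the same reason, since a bare $A$-module isomorphism need not do this; but a blind proof must supply that argument rather than postpone it.)
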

\begin{proof}
Let $\mathfrak{I}_i = e^*_ie_iM(A)$, ($i \in I$) a right principal ideal in $A$. Then $\widetilde{\mathfrak{I}}_i=\mathfrak{I}_iM(A)=e^*_ie_i\widetilde{A}$ is a right $A$-module which is a Hilbert direct sum
\begin{equation}\label{ideal_dir_sum}
\widetilde{\mathfrak{I}}_{i} = \bigoplus_{g \in G} \widetilde{\mathfrak{I}}_{ig}
\end{equation}
where $\widetilde{\mathfrak{I}}_{ig} = g\xi_i^* \rangle \langle g \xi_i M(A)$.
From conditions of theorem \ref{nc_inf_cov_thm} if follows that there is a natural right $A$-module isomorphism $\mathfrak{J}_i \approx \widetilde{\mathfrak{J}}_{ig}$ for any ($g\in G$). The $A$- module isomorphism $\alpha$ transposes summands of \eqref{ideal_dir_sum}. Since $G$ transitively acts on itself there is $g \in G$ such that $g$ corresponds the transposition of direct sum members. Then action of $\alpha$ is uniquely defined as
\begin{equation}\label{act_form}
\alpha(g'\xi_i^* \rangle \langle g' \xi_i a) = gg'\xi_i^* \rangle \langle gg' \xi_i; \ a \in M(A).
\end{equation}
From \eqref{act_form} it follows that $\alpha = g$.
\end{proof}

\section{The noncommutative path lifting}

\begin{defn}
Let $(A, \widetilde{A}, _{\widetilde{A}}X_A, G)$ be a Galois quadruple, and $\omega: I \to \mathrm{Aut}(A)$ is a continuous map with respect to the uniform norm-topology. A continuous  with respect to uniform norm-topology map $\widetilde{\omega}: I \to \mathrm{Aut}(\widetilde{A})$ such that
\begin{equation*}
 \widetilde{\omega}(t)(\widetilde{a}a)=\widetilde{\omega}(t)(\widetilde{a})\omega(t)(a); \ \forall \ t \in I, \ a \in A, \ \widetilde{a} \in \widetilde{A}
\end{equation*}
 is said to be a {\it lift} or {\it lifting} of $\omega$.
\end{defn}
\paragraph*{} Following lemma states uniqueness of a path lift.
\begin{lem}
Let $(A, \widetilde{A}, _{\widetilde{A}}X_A, G)$ be a Galois quadruple, and $\omega: I \to \mathrm{Aut}(A)$ is a continuous map with respect to uniform norm-topology, and let $\widetilde{\omega}_1$, $\widetilde{\omega}_2$ be lifts of  $\omega$ such that $\widetilde{\omega}_1(0)=\widetilde{\omega}_2(0)$. Then $\widetilde{\omega}_1=\widetilde{\omega}_2$.
\end{lem}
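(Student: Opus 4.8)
The plan is to consider the automorphism $\gamma(t) = \widetilde{\omega}_2(t)^{-1}\circ\widetilde{\omega}_1(t) \in \mathrm{Aut}(\widetilde{A})$ and to prove that $\gamma(t) = \mathrm{Id}_{\widetilde{A}}$ for every $t \in I$; this is equivalent to $\widetilde{\omega}_1 = \widetilde{\omega}_2$. By hypothesis $\widetilde{\omega}_1(0) = \widetilde{\omega}_2(0)$, so $\gamma(0) = \mathrm{Id}_{\widetilde{A}}$. The whole argument is a noncommutative translation of the classical fact that a path into a discrete fibre which starts at a given point is constant: the role of the discrete fibre is played by $G$, which Lemma~\ref{aut_norm_lem} shows to be uniformly separated inside $\mathrm{Aut}(\widetilde{A})$.

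First I would show that $\gamma(t) \in G$ for each fixed $t$. Using that $\widetilde{\omega}_2(t)$ is a lift, one derives the dual identity
\begin{equation*}
\widetilde{\omega}_2(t)^{-1}(\widetilde{a}a) = \widetilde{\omega}_2(t)^{-1}(\widetilde{a})\,\omega(t)^{-1}(a), \qquad \widetilde{a} \in \widetilde{A}, \ a \in A,
\end{equation*}
obtained by substituting $\widetilde{a} \mapsto \widetilde{\omega}_2(t)^{-1}(\widetilde{a})$ and $a \mapsto \omega(t)^{-1}(a)$ in the defining relation of a lift and then applying $\widetilde{\omega}_2(t)^{-1}$. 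Combining this with the lift identity for $\widetilde{\omega}_1(t)$ gives
\begin{equation*}
\gamma(t)(\widetilde{a}a) = \widetilde{\omega}_2(t)^{-1}\!\big(\widetilde{\omega}_1(t)(\widetilde{a})\,\omega(t)(a)\big) = \gamma(t)(\widetilde{a})\, a,
\end{equation*}
so $\gamma(t)$ is a right $A$-module isomorphism of $\widetilde{A}$. Lemma~\ref{shift_lem} then yields $\gamma(t) \in G$, and in particular $\gamma(0)$ is the neutral element of $G$.

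Next I would exploit the uniform norm-topology. For $\alpha, \beta \in \mathrm{Aut}(\widetilde{A})$ set $\rho(\alpha,\beta) = \sup_{\widetilde{a} \neq 0}\|\widetilde{a}\|^{-1}\|\alpha(\widetilde{a}) - \beta(\widetilde{a})\|$. Since every *-automorphism is isometric, $\rho$ is a left-invariant metric with $\rho(\alpha,\beta) = \|\beta^{-1}\alpha\|$ in the sense of \eqref{uniform_norm_topology_formula}, and it induces exactly the uniform norm-topology. Consequently the function $d(t) = \rho(\widetilde{\omega}_1(t),\widetilde{\omega}_2(t)) = \|\gamma(t)\|$ is continuous on $I$, being the composite of the continuous maps $\widetilde{\omega}_1,\widetilde{\omega}_2$ with the metric $\rho$; quantitatively, $|d(t) - d(s)| \le \rho(\widetilde{\omega}_1(t),\widetilde{\omega}_1(s)) + \rho(\widetilde{\omega}_2(t),\widetilde{\omega}_2(s))$, and the right-hand side tends to $0$ as $s \to t$ by continuity of $\widetilde{\omega}_1,\widetilde{\omega}_2$.

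Finally I would combine the two ingredients. Since $\gamma(t) \in G$, Lemma~\ref{aut_norm_lem} forces $d(t) = \|\gamma(t)\| \ge 1$ whenever $\gamma(t)$ is nontrivial, while $d(t) = 0$ when $\gamma(t) = \mathrm{Id}_{\widetilde{A}}$; thus $d(I) \subseteq \{0\}\cup[1,\infty)$. As $I = [0,1]$ is connected and $d$ is continuous, $d(I)$ is connected, hence contained in the clopen piece containing $d(0) = 0$, i.e. $d \equiv 0$. Therefore $\gamma(t) = \mathrm{Id}_{\widetilde{A}}$ for all $t$ and $\widetilde{\omega}_1 = \widetilde{\omega}_2$. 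I expect the main obstacle to be the bookkeeping in the second step, namely verifying that $\gamma(t)$ really is a right $A$-module map, together with the (mostly formal but necessary) observation that the uniform norm-topology is metrized by the left-invariant $\rho$, which is what makes $d$ continuous; the gap estimate $\|g\| \ge 1$ for nontrivial $g \in G$ and the connectedness of $I$ then do the rest.
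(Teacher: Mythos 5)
Your proof is correct, and it rests on the same two pillars as the paper's: Lemma~\ref{shift_lem}, to identify the pointwise deviation of the two lifts with an element of $G$, and Lemma~\ref{aut_norm_lem}, to give the uniform gap $\|g\|\ge 1$ for nontrivial $g$. Where you genuinely differ is the topological endgame. The paper argues by contradiction at the supremum $T$ of the coincidence set $U=\{x\in I \mid \widetilde{\omega}_1(t)=\widetilde{\omega}_2(t),\ 0\le t\le x\}$: continuity furnishes the $1/4$-estimates \eqref{neq_1} just past $T$, which together with $\widetilde{\omega}_1(T)=\widetilde{\omega}_2(T)$ and the triangle inequality contradict the gap \eqref{neq_2}. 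You instead package the comparison into the single function $d(t)=\|\widetilde{\omega}_2(t)^{-1}\widetilde{\omega}_1(t)\|$, observe that $d(I)\subseteq\{0\}\cup[1,\infty)$, and conclude $d\equiv 0$ from connectedness of $I$ and $d(0)=0$; this is structurally the classical clopen/connectedness proof of unique lifting (cf.\ Theorem~\ref{spanier_thm_un}) and it eliminates the paper's $\varepsilon$-bookkeeping. Your version also makes explicit two steps the paper leaves implicit: the verification that $\gamma(t)=\widetilde{\omega}_2(t)^{-1}\circ\widetilde{\omega}_1(t)$ is a right $A$-module automorphism (the hypothesis of Lemma~\ref{shift_lem}, which the paper invokes for the composite of two lifts without checking it), and the observation that the uniform norm-topology is induced by the bi-invariant metric $\rho(\alpha,\beta)=\|\beta^{-1}\alpha\|$ in the sense of \eqref{uniform_norm_topology_formula} --- a fact both arguments need, yours for the continuity of $d$ and the paper's for the comparison of \eqref{neq_1} with \eqref{neq_2}. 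So the two proofs trade the paper's brevity for your greater completeness and a cleaner global argument.
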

\begin{proof}
Suppose that $U = \{x \in I \ | \  \widetilde{\omega}_1(t)=\widetilde{\omega}_2(t), 0\le t \le x \}$.
 Then $U = [0,T]$. If $T < 1$ then $\widetilde{\omega}_1\ne\widetilde{\omega}_2$. Since $\widetilde{\omega}_1$, $\widetilde{\omega}_2$ are continuous maps, there is $\varepsilon > 0$ such that
\begin{equation}\label{neq_1}
\|\left(\widetilde{\omega}_i(T)\right)^{-1}\widetilde{\omega}_i(T+t)\| < 1/4; \ 0 \le t < \varepsilon,  \ i =1,2.
\end{equation}
Let $t < \varepsilon$ be such that $\widetilde{\omega}_1(T + t) \ne \widetilde{\omega}_2(T + t)$. From lemma \ref{shift_lem} it follows that there is nontrivial element $g\in G$ such that $\widetilde{\omega}_1(T + t) = g \widetilde{\omega}_2(T + t)$. From lemma \ref{aut_norm_lem} it follows that
 \begin{equation}\label{neq_2}
\|g\| = \|(\widetilde{\omega}_1(T + t))^{-1}\widetilde{\omega}_2(T + t)\| \ge 1.
 \end{equation}
 From $\widetilde{\omega}_1(T) = \widetilde{\omega}_2(T)$ it follows a contradiction between \eqref{neq_1} and \eqref{neq_2}. This contradiction proves this lemma.
 \end{proof}
 \paragraph*{}
 Now we shall prove the existence of path lifting. Let $(A, \widetilde{A}, _{\widetilde{A}}X_A, G)$ be a Galois quadruple. We suppose that right ideals and right $A$-modules given by
 \begin{equation}\label{irr_id}
\mathfrak{I}_i=e^*_ie_iM(A) \subset A,
 \end{equation}
  \begin{equation}\label{irr_mod}
\widetilde{\mathfrak{I}}_{ig} = g\xi_i^* \rangle \langle g \xi_i M(A) \subset\widetilde{A}.
  \end{equation}
 are irreducible \cite{douns:mod_ring}.
 If not we can decompose modules to direct sums of irreducible ones.
 Following lemma states the existence of path lift.
 \begin{lem}
 Let $(A, \widetilde{A}, _{\widetilde{A}}X_A, G)$ be a Galois quadruple, and $\omega: I \to \mathrm{Aut}(A)$ is a continuous map with respect to uniform norm-topology. There exist a path lift  $\widetilde{\omega}: I \to \mathrm{Aut}\left(\widetilde{A}\right)$ of $\omega$.
 \end{lem}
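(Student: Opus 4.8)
The plan is to imitate the classical proof of path lifting: first construct a canonical lift for automorphisms lying in a small uniform neighbourhood of $\mathrm{Id}_A$, and then use compactness of $I$ together with the uniqueness lemma just proved to patch these local lifts into a global one.

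First I would establish a local lifting statement: there is $\delta > 0$ such that every $\alpha \in \mathrm{Aut}(A)$ with $\|\alpha\| < \delta$ admits a lift $\widetilde{\alpha} \in \mathrm{Aut}(\widetilde{A})$ satisfying $\|\widetilde{\alpha}\| < 1$, and the assignment $\alpha \mapsto \widetilde{\alpha}$ is continuous with the lift of $\mathrm{Id}_A$ equal to $\mathrm{Id}_{\widetilde{A}}$. The construction transports $\alpha$ through the right $A$-module structure: by \eqref{ideal_dir_sum} each $\widetilde{\mathfrak{I}}_{i}$ splits as $\bigoplus_{g \in G} \widetilde{\mathfrak{I}}_{ig}$ with every summand isomorphic, as a right $A$-module, to the irreducible ideal $\mathfrak{I}_i$, so the action of $\alpha$ on $\mathfrak{I}_i$ can be carried over to an action on each $\widetilde{\mathfrak{I}}_{ig}$. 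Irreducibility of $\mathfrak{I}_i$ and $\widetilde{\mathfrak{I}}_{ig}$ (a Schur-type rigidity) makes this transport canonical up to the ambiguity coming from the $G$-action, and Lemma \ref{aut_norm_lem} resolves that ambiguity: since any two lifts of $\alpha$ differ by composition with a $g \in G$ (Lemma \ref{shift_lem}), and a nontrivial such $g$ has $\|g\| \ge 1$, at most one lift can satisfy $\|\widetilde{\alpha}\| < 1$. This selected lift is the one I would use, and its continuity in $\alpha$ follows from the explicit transport formula.

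Next I would patch. Note first that composition of lifts lifts the composition: if $\widetilde{\phi}$ lifts $\phi$ and $\widetilde{\psi}$ lifts $\psi$ then $\widetilde{\phi}\circ\widetilde{\psi}$ lifts $\phi\circ\psi$, a one-line check from the defining identity and the fact that elements of $G$ are right $A$-linear. Since $\omega$ is continuous in the uniform norm-topology and $I$ is compact, $\omega$ is uniformly continuous, so I can choose a partition $0 = t_0 < t_1 < \dots < t_n = 1$ with $\|\omega(t)\,\omega(t_k)^{-1}\| < \delta$ for all $t \in [t_k, t_{k+1}]$ (here one uses the elementary estimate $\|\beta_1 \beta_2\| \le \|\beta_1\| + \|\beta_2\|$ together with the isometry of automorphisms). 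Fixing a lift $\widetilde{\omega}(t_0)$ of $\omega(t_0)$, I would define, on $[t_k, t_{k+1}]$,
\begin{equation*}
\widetilde{\omega}(t) = \widetilde{\omega(t)\,\omega(t_k)^{-1}} \circ \widetilde{\omega}(t_k),
\end{equation*}
where $\widetilde{\omega(t)\,\omega(t_k)^{-1}}$ is the local lift normalized to equal $\mathrm{Id}_{\widetilde{A}}$ at $t = t_k$. By the composition property this is a lift of $\omega(t)$, it is continuous, and at $t = t_k$ it agrees with the value built at the previous step; proceeding inductively in $k$ produces the global lift.

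The main obstacle is the local lifting step: proving that a lift exists at all near the identity, that it can be chosen canonically, and that it depends continuously on $\alpha$. The rigidity forced by irreducibility of the ideals and the uniform gap $\|g\| \ge 1$ of Lemma \ref{aut_norm_lem} are exactly what make the near-identity branch well defined; once that is in hand, the gluing is routine and its consistency is guaranteed by the uniqueness lemma.
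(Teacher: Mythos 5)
Your proposal follows essentially the same route as the paper's own proof: the paper also transports $\omega(t)$ through the decomposition \eqref{ideal_dir_sum1} into irreducible right $A$-modules (tracking $e_i(t)=\omega(t)(e_i)$ and the corresponding elements $\xi_i(t)$ in the summands), uses the gap of Lemma \ref{aut_norm_lem} to single out the branch close to the identity, and extends the lift interval by interval, invoking compactness of $[0,1]$. Your reformulation --- a normalized local lift of $\omega(t)\,\omega(t_k)^{-1}$ composed with $\widetilde{\omega}(t_k)$, justified by uniform continuity and a finite partition --- is the same argument organized a bit more explicitly, and in fact it fills in the paper's rather terse compactness step. One constant needs repair, however: from Lemmas \ref{shift_lem} and \ref{aut_norm_lem} alone, two lifts $\widetilde{\alpha}_1=g\widetilde{\alpha}_2$ both of norm $<1$ only give $\|g\|\le\|\widetilde{\alpha}_1\|+\|\widetilde{\alpha}_2\|<2$, which does not contradict $\|g\|\ge 1$; so your claim that at most one lift satisfies $\|\widetilde{\alpha}\|<1$ is not justified as stated. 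The normalization threshold must be $\|\widetilde{\alpha}\|<1/2$ (the paper works with $1/4$), with $\delta$ chosen accordingly small; with that change the near-identity branch is well defined and the rest of your patching argument goes through.
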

\begin{proof}
Let $\varepsilon > 0$ be such that, if $t < \varepsilon$ then $\|\omega(0)^{-1}\omega(t)\| < 1/ 4$. Let $i \in I$ be an index defined in the theorem \ref{nc_inf_cov_thm}. Denote by $e_i(t)=\omega(t)(e_i)$. If $t < \varepsilon$ then $\|e_i - e_i(t)\| < 1/4 \|e_i\|$ for any $i \in I$. Let $\mathfrak{I}_i$, (resp. $\widetilde{\mathfrak{I}}_{ig}$) be given by \eqref{irr_id} (resp. \eqref{irr_mod}) From \eqref{ideal_dir_sum} it follows that
\begin{equation}\label{ideal_dir_sum1}
e^*_ie_i \widetilde{A} = \mathfrak{J}_i \widetilde{A} = \bigoplus_{g \in G} \widetilde{\mathfrak{I}}_{ig}
\end{equation}
 where $\bigoplus$ means the Hilbert direct sum.
 The ideal
 \begin{equation*}
 \mathfrak{I}_i(t)=e^*_i(t)e_i(t)M(A)
 \end{equation*}
 is irreducible for any $t$, because $\omega(t)$ is a *-isomorphism.
 So
\begin{equation}
\mathfrak{I}_i(t) \widetilde{A} = e^*_i(t)e_i(t) \widetilde{A} = \bigoplus_{j \in J} \widetilde{\mathfrak{I}}_{ij}(t).
\end{equation}
where $|J| = |G|$ and all right $A$-modules $\widetilde{\mathfrak{I}}_{ij}(t)$ are irreducible and $\widetilde{\mathfrak{I}}_{ij}(t)\approx \mathfrak{I}_i(t)$, $\forall j \in J$. For any $j \in J$ there is the unique $\xi_{ij}(t)\in  \widetilde{\mathfrak{I}}_{ij}(t)$ such that $\xi_{ij}(t)$ corresponds to $e_i(t)$ by isomorphism $\widetilde{\mathfrak{I}}_{ij}(t)\approx \mathfrak{I}_i(t)$. Let $J_i(t) = \{\xi_{ij}(t)\}_{j \in J}$.  For any $0 \le t \le \varepsilon$ denote by $\xi_i(t)\in J_i(t)$ the unique element such that
\begin{equation*}
\|\xi_i - \xi_i(t)\| < 1/4 \|\xi_i\|.
\end{equation*}
Lemma \ref{aut_norm_lem} guaranties uniqueness of $\xi_i(t)$ and a map $t \mapsto \xi_i(t)$ is continuous in the norm topology.  Define $\widetilde{\omega}: [0, \varepsilon] \to \mathrm{Aut}(\widetilde{A})$ such that
\begin{equation*}
\widetilde{\omega}(t)( g\xi_i^* \rangle \langle g \xi_i a) = g\xi_i^*(t) \rangle \langle g \xi_i(t) \omega(t)(a); \ \forall \ i \in I, \ g \in G, \ a \in M(A).
\end{equation*}
Similarly there is $\varepsilon_1> 0$ such that if $0 \le t \le \varepsilon_1$ then $\|\omega(\varepsilon)^{-1}\omega(\varepsilon + t)\| < 1/ 4$ and a domain of $\widetilde{\omega}$ can be extended to the interval $[0, \varepsilon + \varepsilon_1]$. Thus the path $\widetilde{\omega}$  can be extended to the whole interval $[0,1]$ because $[0,1]$ is a compact set.
 \end{proof}
 \section{Application}
 \paragraph*{} The unique path lifting property is used in my article \cite{ivankov:nc_wilson_lines} devoted to the noncommutative generalization of Wilson lines.


\end{document}